\theoremstyle{plain}
\newtheorem{theorem}{Theorem}[section]
\newtheorem{lemma}[theorem]{Lemma}
\newtheorem{corollary}[theorem]{Corollary}
\newtheorem{definition}[theorem]{Definition}
\newcommand{\seq}{\{x_n\}}
\date{}
\title{Compact Asymptotic Center and Common Fixed Point in Strictly
Convex Banach Spaces}
\author{Ali Abkar and Mohammad Eslamian}
\begin{document}
\maketitle Department of Mathematics, Imam Khomeini International
University, Qazvin 34149, Iran; email: abkar@ikiu.ac.ir,
mhmdeslamian@gmail.com

\begin{abstract}In this paper, we present some common fixed
point theorems for a commuting pair of mappings, including a
generalized nonexpansive singlevalued mapping and a generalized
nonexpansive multivalued mapping in strictly convex Banach spaces.
The results obtained in this paper extend and improve some recent
known results.\end{abstract}\maketitle \noindent Key Words: common
fixed point, generalized nonexpansive mapping, strictly convex
Banach space, asymptotic center
\section{Introduction}
The study of fixed points for multivalued contractions and
nonexpansive mappings using the Hausdorff metric was initiated by
Markin \cite{ma} and Nadler \cite{na}. Since then the metric fixed
point theory of multivalued mappings has been rapidly developed.
Using the Edelstein's method of asymptotic centers, Lim \cite{lim}
proved the existence of fixed points for multivalued nonexpansive
mappings on uniformly convex Banach spaces. Kirk and Massa \cite{km}
extended Lim's theorem to Banach spaces for which the asymptotic
center of a bounded sequence in a bounded closed convex subset is
nonempty and compact.\par On the other hand, in 2008, Suzuki
\cite{suz} introduced a condition on mappings, called condition (C),
which is weaker than nonexpansiveness and stronger than quasi
nonexpansiveness. He then proved some fixed point and convergence
theorems for such mappings. Motivated by this result, J.
Garcia-Falset, E. Llorens-Fuster and T. Suzuki in \cite{gf},
introduced two kinds of generalization for the condition (C) and
studied both the existence of fixed points and their asymptotic
behavior. Very recently, the current authors used a modified
condition for multivalued mappings, and proved some fixed point
theorems for multivalued mappings satisfying this condition in
Banach spaces \cite{a1,a2}, as well as in CAT(0) spaces \cite{a3}.
\par Let $D$ be a nonempty subset of a
strictly convex Banach space $X$, and let $T:D\to KC(D)$ be a
generalized nonexpansive multivalued mapping in the sense of
Garcia-Falset et al., that is, a mapping satisfying the conditions
(E) and $(C_\lambda)$ (see Definitions 2.9 and 2.10). Moreover, let
$t:D\to D$ be a single valued mapping satisfying the conditions (E)
and $(C_\lambda)$ of definitions 2.2 and 2.3. We call $t$ a
single-valued generalized nonexpansive mapping. We assume that $t$
and $T$ commute, and that the asymptotic center of a bounded
sequence in the fixed point set of $t$ is nonempty and compact. The
main result of this paper says that $t$ and $T$ have a common fixed
point (see Theorem 3.5). Our result improves a number of known
results; including that of Suzuki \cite{suz}, Garcia et al.
\cite{gf}, Kirk and Massa \cite{km}, Dhompongsa et al. \cite{dh},
and of Lim \cite{lim}.

\section{Preliminaries}
Let $X$ be a Banach space. $X$ is said to be strictly convex if
$\|x+y\|<2$ for all $x,y\in X,\, \|x\|=\|y\|=1$  and $x\neq y$. We
recall that a Banach space $X$ is said to be \emph{uniformly convex
in every direction} (UCDE, for short)  provided that for every
$\epsilon\in (0,2]$ and $z\in X$ with $\|z\|=1$, there exists a
positive number $\delta$ (depending on $\epsilon$ and $z$) such that
for all $ x,y\in X$ with $\|x\|\le 1,\, \|y\|\le 1,$ and $x-y\in\{
tz: t\in [-2, -\epsilon ]\cup [\epsilon, 2]\}$ we have $\|x+y\|\le
2(1-\delta)$. $X$ is said to be \emph{uniformly convex} if $X$ is
UCED and $inf\{\delta(\epsilon,z):\|z\|=1\}>0$ for all $\epsilon\in
(0,2]$. It is obvious that uniformly convexity implies UCED, and
UCED implies strictly convexity. \\The following definition is due
to Susuki \cite{suz}.

\begin{definition}(\cite{suz}) Let $T$ be a mapping on a subset $D$
of a  Banach space $X$. $T$ is said to satisfy condition (C) if
$$\frac{1}{2}\|x-Tx\|\le \|x-y\| \implies \|Tx-Ty\|\le \|x-y\|,\quad
x,y\in E.$$\end{definition} \noindent{\bf Example}  (\cite{suz}).
Define a mapping $T$ on [0,3] by $$T(x)=\begin{cases}0,&x \neq 3\\
1,& x=3.\end{cases}$$ Then $T$ satisfies the condition $(C)$, but
$T$ is not continuous, and hence $T$ is not nonexpansive.

 In \cite{gf}, J. Garcia-Falset et al.
introduced two generalizations of the condition (C) in a Banach
space:
\begin {definition}
 Let $T$ be a mapping on a subset $D$
of a Banach space $X$ and $\mu\geq1$. $T$ is said to satisfy
condition $(E_{\mu})$ if
$$\|x-Ty\|\le\mu \|x-Tx\|+\|x-y\|,\quad
x,y\in D.$$ We say that $T$ satisfies condition (E) whenever $T$
satisfies the condition $(E_{\mu})$ for some $\mu\geq1.$
\end{definition}
\begin{definition} Let $T$ be a
mapping on a subset $D$ of a Banach space $X$ and $\lambda\in
(0,1)$. $T$ is said to satisfy condition $(C_{\lambda})$ if
$$\lambda \|x-Tx\|\le \|x-y\| \implies \|Tx-Ty\|\le \|x-y\|,\quad
x,y\in D.$$ \end{definition} Notice that if
$0<\lambda_{1}<\lambda_{2}<1$ then the condition $(C_{\lambda_{1}})$
implies the condition $(C_{\lambda_{2}}).$ The following example
shows that the class of mappings satisfying the conditions (E) and
$(C_{\lambda})$ for some $\lambda\in(0,1)$ is
broader than the class  of mappings satisfying the condition (C).\\
\noindent{\bf Example} (\cite{gf}). For a given $ \lambda \in (0,1)$
define a mapping $T$ on [0,1] by
 $$T(x)=\begin{cases}\frac{x}{2},&x \neq 1\\ \frac{1+\lambda}{2+\lambda},&
x=1.\end{cases}$$ Then the mapping $T$ satisfies condition
$(C_{\lambda})$ but it fails condition $(C_{\lambda'})$ whenever $0
<\lambda<\lambda'.$ Moreover $T$ satisfies condition $(E_{\mu})$ for
$\mu=\frac{2+\lambda}{2}$.
\begin{theorem}(\cite {gf}) Let $D$ be a nonempty bounded convex subset of a Banach space $X$.
Let $T:D\to D$ satisfy the condition $(C_{\lambda})$ on $D$ for some
$\lambda\in(0,1)$. For $r\in  [\lambda,1)$ define a sequence
$\{x_{n}\}$ in $D$ by taking $x_{1}\in D$ and
$$x_{n+1}=rT(x_{n})+(1-r)x_{n} \qquad for\quad n\geq1,$$  then  $\{x_n\}$ is an
approximate fixed point sequence for $T$, that is
$$\lim_{n\to\infty} d(x_n,T(x_n))=0.$$
\end{theorem}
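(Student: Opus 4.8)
The plan is to recognize this as a Krasnoselskii--Mann iteration and to reduce the claim to the classical lemma of Goebel and Kirk on such iterations, which asserts the following: if $\{z_n\}$ and $\{w_n\}$ are bounded sequences in a Banach space, $\beta\in(0,1)$, $z_{n+1}=\beta w_n+(1-\beta)z_n$ for all $n$, and $\|w_{n+1}-w_n\|\le\|z_{n+1}-z_n\|$ for all $n$, then $\lim_{n\to\infty}\|w_n-z_n\|=0$. I would set $z_n=x_n$ and $w_n=T(x_n)$, so that the defining recursion $x_{n+1}=rT(x_n)+(1-r)x_n$ becomes exactly the hypothesis $z_{n+1}=r\,w_n+(1-r)z_n$ with $\beta=r\in[\lambda,1)\subset(0,1)$. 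Boundedness of both sequences is immediate, since $x_n\in D$ and $T(x_n)\in D$ while $D$ is bounded.

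The heart of the argument is to verify the remaining hypothesis $\|w_{n+1}-w_n\|\le\|z_{n+1}-z_n\|$, that is, that consecutive $T$-images are contracted:
$$\|T(x_{n+1})-T(x_n)\|\le\|x_{n+1}-x_n\|.$$
Here I would invoke condition $(C_{\lambda})$ with $x=x_n$ and $y=x_{n+1}$. The crucial computation comes straight from the definition of the iteration,
$$\|x_{n+1}-x_n\|=\|rT(x_n)+(1-r)x_n-x_n\|=r\,\|T(x_n)-x_n\|.$$
Since $r\ge\lambda$, this gives $\lambda\|x_n-T(x_n)\|\le r\,\|x_n-T(x_n)\|=\|x_n-x_{n+1}\|$, which is precisely the premise of $(C_{\lambda})$ for the pair $(x_n,x_{n+1})$. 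Condition $(C_{\lambda})$ then yields $\|T(x_n)-T(x_{n+1})\|\le\|x_n-x_{n+1}\|$, i.e. $\|w_{n+1}-w_n\|\le\|z_{n+1}-z_n\|$, as needed.

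With every hypothesis in place, the Goebel--Kirk lemma delivers $\lim_{n}\|w_n-z_n\|=\lim_{n}\|T(x_n)-x_n\|=0$, and because $T$ is single-valued this is exactly $\lim_{n}d(x_n,T(x_n))=0$. I expect the only genuine obstacle to be the contraction step of the second paragraph: it is there that the hypothesis $r\in[\lambda,1)$ enters essentially, because $(C_{\lambda})$ applies only once the displacement $\|x_n-x_{n+1}\|$ is known to dominate $\lambda\|x_n-T(x_n)\|$, and the inequality $r\ge\lambda$ is exactly what guarantees this. The deeper analytic content is packaged in the Goebel--Kirk lemma itself (a telescoping convexity induction); were it not available to cite, I would establish it separately, but here I would treat it as a known ingredient.
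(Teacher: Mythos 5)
Your proof is correct and is essentially the argument the paper intends: the theorem is quoted from \cite{gf} without proof, but the paper's own Lemma 2.11 (Goebel--Kirk) together with the observation that $\|x_{n+1}-x_n\|=r\|T(x_n)-x_n\|\ge\lambda\|x_n-T(x_n)\|$ triggers condition $(C_\lambda)$ is exactly the mechanism used both in the original source and in the paper's proof of Theorem 3.5. No gaps; the reduction to Lemma 2.11 with $w_n=T(x_n)$, $z_n=x_n$, $\beta=r$ is complete.
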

\begin{lemma}(\cite{gf})
Let $T$ be a mapping defined on a closed subset $D$ of a Banach
space $X$. Let $T$ be a single valued
 mapping satisfying the conditions (E) and $(C_{\lambda})$
for some  $\lambda\in(0,1)$. Then Fix(T) is closed. Moreover, if $X$
is strictly convex and $D$ is convex, then Fix(T) is also convex.
\end{lemma}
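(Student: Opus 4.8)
The plan is to treat the two assertions separately: closedness will follow from condition (E) alone, while convexity will require condition $(C_\lambda)$ together with strict convexity of $X$. For the closedness, I would take an arbitrary sequence $\{x_n\}$ in $\mathrm{Fix}(T)$ converging to some point $x$; since $D$ is closed we automatically have $x\in D$, so it suffices to verify $Tx=x$. The decisive step is to invoke condition $(E_\mu)$ for the pair $(x_n,x)$, which gives $\|x_n-Tx\|\le \mu\|x_n-Tx_n\|+\|x_n-x\|$. Because $x_n\in\mathrm{Fix}(T)$, the term $\|x_n-Tx_n\|$ vanishes, leaving $\|x_n-Tx\|\le\|x_n-x\|\to 0$. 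Thus $x_n\to Tx$, and comparing with $x_n\to x$ via uniqueness of limits yields $Tx=x$, so $\mathrm{Fix}(T)$ is closed.

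For the convexity claim I would fix $x,y\in\mathrm{Fix}(T)$ and $\alpha\in(0,1)$, and set $z=\alpha x+(1-\alpha)y$, which lies in $D$ by convexity; the goal is to show $Tz=z$. Since $\|x-Tx\|=0\le\|x-z\|$, condition $(C_\lambda)$ applies and gives $\|x-Tz\|=\|Tx-Tz\|\le\|x-z\|=(1-\alpha)\|x-y\|$. By the symmetric computation with $y$ in place of $x$, I would obtain $\|Tz-y\|\le\|z-y\|=\alpha\|x-y\|$. Adding these two estimates and inserting the triangle inequality forces
$$\|x-y\|\le\|x-Tz\|+\|Tz-y\|\le (1-\alpha)\|x-y\|+\alpha\|x-y\|=\|x-y\|,$$
so equality holds at every stage; in particular $\|x-Tz\|=(1-\alpha)\|x-y\|$ and $\|Tz-y\|=\alpha\|x-y\|$.

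The conclusion is then drawn from strict convexity. Writing $u=Tz-x$ and $v=y-Tz$, the chain of equalities above says $\|u\|+\|v\|=\|u+v\|$; in a strictly convex space this forces $u$ and $v$ to be nonnegative scalar multiples of one another, so that $Tz$ lies on the segment $[x,y]$. The exact distances $\|x-Tz\|=(1-\alpha)\|x-y\|$ and $\|Tz-y\|=\alpha\|x-y\|$ then pin down the point uniquely as $Tz=\alpha x+(1-\alpha)y=z$, giving $z\in\mathrm{Fix}(T)$ as required.

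I expect the strict convexity step to be the only real obstacle: it is the sole place where the hypothesis on $X$ is used, and it rests on the standard characterization that in a strictly convex Banach space the equality $\|a+b\|=\|a\|+\|b\|$ with $a,b\neq 0$ implies $a=tb$ for some $t>0$. Everything else reduces to direct applications of the two conditions and the triangle inequality.
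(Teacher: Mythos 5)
Your argument is correct. Note that the paper itself gives no proof of this lemma---it is imported from the cited reference of Garcia-Falset, Llorens-Fuster and Suzuki---and your two-part argument (closedness via condition $(E_\mu)$ applied to the pair $(x_n,x)$ with the $\mu\|x_n-Tx_n\|$ term vanishing, convexity via the quasi-nonexpansiveness supplied by $(C_\lambda)$ at fixed points together with the strict-convexity characterization of equality in the triangle inequality) is exactly the standard proof given there; the only cosmetic omission is the trivial check that $x\neq y$ and $Tz\notin\{x,y\}$ before invoking the characterization $\|u+v\|=\|u\|+\|v\|\Rightarrow u=tv$, $t>0$, which your distance computations already rule out.
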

Let $D$ be a nonempty subset of $X$ Banach space $X$. for $x\in X$
denote$$dist(x,D)=inf\{\parallel x-z\parallel: z\in D\}.$$ We denote
by $CB(D)$ and $KC(D)$  the collection of all nonempty closed
bounded subsets, and nonempty compact convex subsets of $D$
respectively. The Hausdorff metric $H$ on $CB(X)$ is defined by
$$H(A,B):=\max \{\sup_{x\in A}dist(x,B),\sup_{y\in
B}dist(y,A)\},$$for all $A,B\in CB(X).$\\ Let $T:X\to 2^{X}$ be a
multivalued mapping. An element $x\in X$ is said to be a fixed point
of $T$, if  $x\in Tx$. \par It is obvious that if $D$ is a convex
subset of strictly convex Banach space $X$, then for $x\in X$, if
there exist $y,z\in D$ such that $$\parallel x-y\parallel=dist
(x,D)=\parallel x-z\parallel$$ then $y=z.$
\begin{definition}
A multivalued mapping $T:X\to CB(X)$ is said to be nonexpansive
provided that
$$H(Tx,Ty)\le \|x-y\|,\quad x,y\in X.$$
\end{definition}
  We state Suzuki's condition for multivalued mappings as follows:
\begin{definition}
A multivalued mapping $T:X\to CB(X)$ is said to satisfy the
condition $(C)$ provided that
$$ \frac{1}{2}dist(x,Tx)\le \|x-y\|\implies H(Tx,Ty)\le \|x-y\|,\quad
x,y\in X.$$ \end{definition} We now state the multivalued analogs of
the conditions (E) and $(C_{\lambda})$
 in the following manner (see also \cite{a3}):
\begin{definition}A multivalued mapping $T:X\to CB(X)$ is said to satisfy
 condition $(E_{\mu})$ provided that
$$dist(x,Ty)\le\mu dist(x,Tx)+\|x-y\|,\quad
x,y\in X.$$ We say that $T$ satisfies condition (E) whenever $T$
satisfies $(E_{\mu})$ for some $\mu\geq1.$
\end{definition}
\begin{definition}
A multivalued mapping $T:X\to CB(X)$ is said to satisfy condition
 $(C_{\lambda})$ for some $\lambda\in (0,1)$  provided that
$$\lambda dist(x,Tx)\le \|x-y\| \implies H(Tx,Ty)\le \|x-y\|,\quad
x,y\in X.$$ \end{definition}
\begin{lemma}
 Let $T:X\to CB(X)$ be a multivalued nonexpansive mapping, then $T$ satisfies the condition $(E_{1})$.
\end{lemma}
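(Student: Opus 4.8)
The plan is to verify condition $(E_1)$ directly from the definitions, reducing everything to a single triangle-inequality estimate combined with the defining bound for the Hausdorff metric. Fix arbitrary $x,y\in X$; the goal is to establish
$$dist(x,Ty)\le dist(x,Tx)+\|x-y\|.$$

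First I would observe that for any point $u\in Tx$ and any point $w\in Ty$ the ordinary triangle inequality gives $\|x-w\|\le\|x-u\|+\|u-w\|$. Taking the infimum over $w\in Ty$ on both sides yields $dist(x,Ty)\le\|x-u\|+dist(u,Ty)$, an inequality valid for every $u\in Tx$.

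Next I would control the term $dist(u,Ty)$ uniformly in $u$. By the definition of the Hausdorff metric, $dist(u,Ty)\le\sup_{u'\in Tx}dist(u',Ty)\le H(Tx,Ty)$ for every $u\in Tx$, and multivalued nonexpansiveness of $T$ gives $H(Tx,Ty)\le\|x-y\|$. Combining this with the previous step, I obtain $dist(x,Ty)\le\|x-u\|+\|x-y\|$ for all $u\in Tx$. Here $Tx\in CB(X)$ guarantees $dist(x,Tx)$ is a genuine (finite) infimum, though the argument does not require it to be attained.

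The final step is to take the infimum over $u\in Tx$. Since the bound $\|x-y\|$ on the second summand is independent of $u$, this passage is legitimate and produces $dist(x,Ty)\le dist(x,Tx)+\|x-y\|$, which is exactly condition $(E_1)$. I expect no serious obstacle; the only point requiring care is the order of the two infima — one must bound $dist(u,Ty)$ by $\|x-y\|$ uniformly in $u$ \emph{before} passing to the infimum over $u\in Tx$, rather than attempting to optimize both terms at once.
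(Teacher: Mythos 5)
Your proof is correct and complete: the chain $dist(x,Ty)\le\|x-u\|+dist(u,Ty)\le\|x-u\|+H(Tx,Ty)\le\|x-u\|+\|x-y\|$, followed by taking the infimum over $u\in Tx$, is exactly the standard argument, and you are right to flag that the uniform bound on $dist(u,Ty)$ must be obtained before the infimum over $u$ is taken. The paper states this lemma without proof, so there is nothing to compare against; your argument is the one the authors evidently had in mind.
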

We now provide an example of a generalized nonexpansive multivalued
mapping satisfying the conditions $(C_{\lambda})$ and
(E) which is not a nonexpansive multivalued mapping (for details, see \cite{a3}).\\
\noindent{\bf Example}. We consider a multivalued mapping $T$ on
$[0,5]$ given by
$$T(x)=\begin{cases}[0,\frac{x}{5}], &x\neq 5\\ \{1\} &x=5.\end{cases}$$
This mapping has the required properties. Finally we recall the
following lemma from \cite{gk}.
\begin{lemma} Let $\{z_n\}$ and $\{w_n\}$ be two bounded sequences
in a Banach space $X$, and let $0<\lambda <1$. If for every natural
number $n$ we have $z_{n+1}=\lambda w_n +(1-\lambda)z_n$ and
$\|w_{n+1}-w_n\|\le \|z_{n+1}-z_n\|$, then $\lim_{n\to
\infty}\|w_n-z_n\|=0$.
\end{lemma}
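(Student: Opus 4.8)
The plan is to set $v_n=z_n-w_n$ and $d_n=\|v_n\|=\|z_n-w_n\|$, and to prove the equivalent statement $d_n\to0$. Observe first that $z_{n+1}-z_n=\lambda(w_n-z_n)=-\lambda v_n$, so that $\|z_{n+1}-z_n\|=\lambda d_n$; hence the desired conclusion $\|w_n-z_n\|\to0$ is the same as the asymptotic regularity $\|z_{n+1}-z_n\|\to0$. Subtracting $w_{n+1}$ from the recursion gives the identity $v_{n+1}=(1-\lambda)v_n-(w_{n+1}-w_n)$, which I would use throughout.

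First I would show that $\{d_n\}$ is non-increasing. Taking norms in the identity above and invoking the hypothesis $\|w_{n+1}-w_n\|\le\|z_{n+1}-z_n\|=\lambda d_n$ yields $d_{n+1}\le(1-\lambda)d_n+\lambda d_n=d_n$. Thus $d_n$ decreases to some limit $d\ge0$, and it remains to prove $d=0$. Next I would bring in boundedness through a telescoping identity. Since $z_{i+n}-z_i=-\lambda\sum_{k=i}^{i+n-1}v_k$, setting $R=\sup_k\|z_k\|<\infty$ we obtain the uniform bound
$$\Big\|\sum_{k=i}^{i+n-1}v_k\Big\|=\frac1\lambda\|z_{i+n}-z_i\|\le\frac{2R}{\lambda}\qquad(i,n\ge1).$$

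The heuristic behind the proof is now visible: the constraint $v_{n+1}=(1-\lambda)v_n-(w_{n+1}-w_n)$ with $\|w_{n+1}-w_n\|\le\lambda d_n$ prevents consecutive vectors $v_n$ from reversing direction. Indeed, in an inner-product space a short law-of-cosines computation shows that equality $\|v_{n+1}\|=\|v_n\|$ already forces $v_{n+1}=v_n$; so if $d>0$ the (asymptotically norm-$d$) vectors $v_k$ would remain nearly aligned and the partial sums $\|\sum_{k=i}^{i+n-1}v_k\|$ would grow like $nd$, contradicting the displayed bound. This would force $d=0$.

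The main obstacle is to make this ``no reversal'' mechanism rigorous in an arbitrary Banach space, where the law of cosines is unavailable. The plan is to replace it by a purely metric estimate proved by induction on $n$: one establishes a lower bound of Ishikawa type, schematically $\|z_{i+n}-z_i\|\ge n\lambda\,d_{i+n}-\gamma_{i,n}$, in which the correction $\gamma_{i,n}$ telescopes and is controlled by $d_i-d_{i+n}\to0$, the induction step using $\|w_{n+1}-w_n\|\le\|z_{n+1}-z_n\|$ precisely to bound the loss incurred when passing from $\sum_{k=i}^{i+n-1}v_k$ to $\sum_{k=i}^{i+n}v_k$. Fixing $i$ and letting $n\to\infty$, the left-hand side stays bounded by $2R$ while the right-hand side tends to $+\infty$ whenever $d>0$; this contradiction yields $d=0$, and hence $\|w_n-z_n\|\to0$. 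Carrying out this induction with the correct correction terms, so that no cancellation is lost, is the delicate point, and is exactly the content of the lemma borrowed from \cite{gk}.
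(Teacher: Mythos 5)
The paper offers no proof of this lemma at all --- it is quoted verbatim from Goebel and Kirk \cite{gk} --- so the only question is whether your sketch would stand on its own. Your setup is correct and matches the standard treatment: with $v_n=z_n-w_n$ and $d_n=\|v_n\|$ one indeed has $z_{n+1}-z_n=-\lambda v_n$, the recursion $v_{n+1}=(1-\lambda)v_n-(w_{n+1}-w_n)$, the monotonicity $d_{n+1}\le(1-\lambda)d_n+\|w_{n+1}-w_n\|\le d_n$, and the telescoped bound $\bigl\|\sum_{k=i}^{i+n-1}v_k\bigr\|\le 2R/\lambda$. The heuristic about non-reversal of the $v_k$ is also the right intuition.

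The genuine gap is that the induction you defer \emph{is} the lemma, and the endgame you describe around it would fail even if you carried out the standard induction. What the Ishikawa/Goebel--Kirk induction actually yields is
$$\|w_{i+n}-z_i\|\ \ge\ (1+n\lambda)\,d_i\ -\ (1-\lambda)^{-n}\bigl(d_i-d_{i+n}\bigr),$$
whose correction term carries the factor $(1-\lambda)^{-n}$. For \emph{fixed} $i$ this factor grows exponentially in $n$ while $d_i-d_{i+n}\to d_i-d$ (generally positive), so the right-hand side does not tend to $+\infty$ and no contradiction with boundedness arises; the quantifiers must be taken in the opposite order from yours: first fix $n$ so large that $(1+n\lambda)d$ exceeds the uniform bound on $\|w_{i+n}-z_i\|$, and only then send $i\to\infty$ to make $(1-\lambda)^{-n}(d_i-d_{i+n})$ small for that fixed $n$. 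Moreover, a correction $\gamma_{i,n}$ that stays bounded (or even sublinear) in $n$ for fixed $i$, which is what ``fix $i$, let $n\to\infty$'' requires, cannot be established: a bounded correction would give $d_m=O(1/m)$, a rate of asymptotic regularity for the Krasnoselskii--Mann iteration that is false in general, the sharp rate being of order $1/\sqrt{m}$ (Baillon--Bruck). So your proposal needs both the actual inductive inequality, with its exponential correction, and the reversed order of limits; as written, the decisive step is missing and the proposed way of closing the argument is unworkable.
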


\section{Common fixed point}
Let $D$ be a nonempty  bounded closed convex subset of a Banach
space $X$ and $\{x_n\}$ a bounded sequence in $X$, we use $
r(x,\seq)$ and $A(D, \seq )$  to denote the asymptotic radius and
the asymptotic center of $\{x_n\}$ in $D$, respectively, i.e.
$$r(D,\seq )=\inf\{ \limsup_{n\to \infty}{\|x_n-x\|}:\, x\in D\},$$
$$A(D,\seq )=\{ x\in D:\,\limsup_{n\to \infty}{\|x_n-x\|}=r(D,\seq )\}.$$
Obviously, the convexity of $E$ implies that $A(D, \seq )$ is
convex. It is known that in a UCED Banach space $X$, the asymptotic
center of a sequence with respect to a weakly compact convex set is
a singleton; the same is true for a sequence in a bounded closed
convex subset of uniformly convex Banach space $X$ \cite{kk}.
\begin{definition}A bounded sequence $\seq$ is said
to be \emph{regular} with respect to $D$ if for every subsequence
$\{x_n^\prime\}$ we have
$$r(D, \seq )=r(D, \{x_n^\prime \});$$ further, $\{x_n\}$ is called
asymptotically uniform relative to $D$
if $$A(D, \seq )=A(D, \{x_n^\prime \}).$$
\end{definition}
The following lemma was proved by Goebel and Lim.
\begin{lemma}(see \cite{gob} and \cite{lim}). Let $\seq$ be a bounded
 sequence in $X$ and let $D$ be a nonempty
closed convex subset of $X$.\begin{enumerate}\item[ (i)] then there
exists a subsequence of $\seq$ which is regular relative to
$D$.\item[(ii)] if $D$ is separable, then $\{x_n\}$ contains a
subsequence which is asymptotically uniform relative to
$D$.\end{enumerate}\end{lemma}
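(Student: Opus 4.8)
The plan is to prove both parts by a diagonal extraction, exploiting the monotonicity of the asymptotic radius under passage to subsequences. The key preliminary observation is that for any subsequence $\{x_n'\}$ of $\seq$ and any fixed $x\in D$ one has $\limsup_n\|x_n'-x\|\le\limsup_n\|x_n-x\|$, since the $\limsup$ of a subsequence never exceeds that of the full sequence; taking the infimum over $x\in D$ yields $r(D,\{x_n'\})\le r(D,\seq)$. Thus passing to a subsequence can only decrease (or preserve) the asymptotic radius, and regularity amounts to having reached a subsequence on which no further decrease is possible.

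For part (i) I would construct nested subsequences $S_0\supseteq S_1\supseteq\cdots$ (each a subsequence of the previous one) together with numbers $r_k$. Set $S_0=\seq$ and, given $S_{k-1}$, put $r_k=\inf\{r(D,S):S\text{ a subsequence of }S_{k-1}\}$ and choose $S_k\subseteq S_{k-1}$ with $r(D,S_k)<r_k+1/k$. By the monotonicity above the $r_k$ are nondecreasing, and they satisfy $r_k\le r_{k+1}<r_k+1/k$, so $r_k\uparrow r^*$ for some $r^*$. Let $\{y_k\}$ be the diagonal subsequence, with $y_k$ the $k$-th term of $S_k$ arranged so that the indices increase; then for every $m$ the tail $\{y_k:k\ge m\}$ lies in $S_m$. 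Since the asymptotic radius depends only on the tail, $r(D,\{y_k\})\ge r_{m+1}$ for all $m$, while $r(D,\{y_k\})\le r(D,S_m)<r_m+1/m$, and the squeeze forces $r(D,\{y_k\})=r^*$. Exactly the same squeeze applies to any subsequence of $\{y_k\}$, whose tail again lies in every $S_m$, so every subsequence has asymptotic radius $r^*$; this is precisely regularity.

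For part (ii) I would use separability to upgrade from radii to full pointwise limits. Let $\{d_j\}$ be dense in $D$. Since $\seq$ is bounded, each scalar sequence $\{\|x_n-d_j\|\}$ is bounded, so a Cantor diagonal argument produces a subsequence $\{x_n'\}$ along which $\lim_n\|x_n'-d_j\|$ exists for every $j$. The crucial step is to promote this to existence of $\phi(x):=\lim_n\|x_n'-x\|$ for all $x\in D$: using the $1$-Lipschitz estimate $\bigl|\,\|x_n'-x\|-\|x_n'-d_j\|\,\bigr|\le\|x-d_j\|$ one gets $\limsup_n\|x_n'-x\|-\liminf_n\|x_n'-x\|\le 2\|x-d_j\|$, and letting $d_j\to x$ shows the limit exists. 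Once $\phi$ is defined on all of $D$, every further subsequence has the same limit $\phi(x)$ at each point, so $r(D,\cdot)$ and $A(D,\cdot)=\{x\in D:\phi(x)=\inf_D\phi\}$ coincide for the sequence and for each of its subsequences; this is asymptotic uniformity.

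The main obstacle is bookkeeping in the diagonal constructions: one must verify that the diagonal sequence is genuinely a subsequence whose tail sits inside each refining stage $S_m$, so that the squeeze estimates apply simultaneously to the diagonal sequence and to all of its subsequences. The analytic content is otherwise light, resting only on the monotonicity of $\limsup$ under subsequences and on the $1$-Lipschitz dependence of $\|x_n-\cdot\|$ on its second argument. Notably, neither part invokes strict convexity or any further geometry of $X$; part (i) needs only boundedness of $\seq$, and part (ii) additionally needs separability of $D$.
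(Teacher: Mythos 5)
Your argument is correct: part (i) is the classical nested-subsequence/diagonal construction of Goebel and Lim (with the convergence of the $r_k$ coming from monotonicity plus the bound furnished by boundedness of $\seq$, and the squeeze $r_{m+1}\le r(D,\{y_k\})<r_m+1/m$ applying verbatim to every further subsequence), and part (ii) is the standard separability/diagonal argument upgrading convergence on a dense set to a limit function $\phi$ via the $1$-Lipschitz dependence on the second argument. The paper itself gives no proof of this lemma, merely citing Goebel and Lim, and your proof is essentially the one found in those references.
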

\begin{theorem}
Let $D$ be a nonempty closed convex bounded subset of a
 Banach space $X$. Let $t:D \to D$ be a single valued
 mapping satisfying the conditions (E) and $(C_{\lambda})$
for some  $\lambda\in(0,1)$.  Suppose the asymptotic center relative
$D$ of each sequence in $D$ is nonempty and compact. Then $T$ has a
fixed point. \end{theorem}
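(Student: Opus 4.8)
The plan is to construct a fixed point by coupling the approximate fixed point construction of Theorem 2.4 with the compactness hypothesis on asymptotic centers, using condition (E) to compensate for the fact that $t$ need not be continuous. First, since $D$ is bounded and convex and $t$ satisfies $(C_{\lambda})$, I would invoke Theorem 2.4: fixing $r \in [\lambda, 1)$ and any $x_1 \in D$, the sequence defined by $x_{n+1} = r\,t(x_n) + (1-r)x_n$ satisfies $\lim_{n\to\infty}\|x_n - t(x_n)\| = 0$. Thus I have an approximate fixed point sequence $\{x_n\}$ in $D$ to work with.

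Next I would form the asymptotic center $A := A(D, \{x_n\})$, which is nonempty and compact by hypothesis, and convex by the remark preceding Definition 3.1. The key claim is that $t(A) \subseteq A$. To see this, let $t$ satisfy $(E_{\mu})$ for some $\mu \geq 1$, and take any $z \in A$. Applying condition $(E_{\mu})$ with the pair $(x_n, z)$ gives $\|x_n - t(z)\| \leq \mu\|x_n - t(x_n)\| + \|x_n - z\|$; passing to $\limsup$ and using $\lim_{n}\|x_n - t(x_n)\| = 0$ together with $z \in A$ yields $\limsup_{n}\|x_n - t(z)\| \leq r(D, \{x_n\})$. Since $t(z) \in D$, the reverse inequality is automatic from the definition of the asymptotic radius, so $t(z) \in A$.

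Now $t$ restricts to a self-map of the bounded convex set $A$ that still satisfies $(C_{\lambda})$, so a second application of Theorem 2.4 produces a sequence $\{y_n\} \subseteq A$ with $\lim_{n\to\infty}\|y_n - t(y_n)\| = 0$. Because $A$ is compact, I may pass to a subsequence $y_{n_k} \to y^\ast \in A$. Finally, applying condition $(E_{\mu})$ with the pair $(y_{n_k}, y^\ast)$ gives $\|y_{n_k} - t(y^\ast)\| \leq \mu\|y_{n_k} - t(y_{n_k})\| + \|y_{n_k} - y^\ast\|$, and letting $k \to \infty$ forces $\|y^\ast - t(y^\ast)\| = 0$, that is, $y^\ast \in \mathrm{Fix}(t)$.

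The main obstacle is precisely the discontinuity of $t$: one cannot simply pass $t$ through the limit $y_{n_k} \to y^\ast$, so the naive argument ``$t(y_{n_k}) \to t(y^\ast)$'' is unavailable. Condition (E) is exactly what circumvents this difficulty, and it is used twice in an essential way—first to establish the invariance $t(A) \subseteq A$, and then in the final limiting step to force the distance $\|y^\ast - t(y^\ast)\|$ to vanish. The compactness of the asymptotic center plays the role that norm-compactness of $D$ would play in an easier setting, supplying the convergent subsequence from which the fixed point is extracted.
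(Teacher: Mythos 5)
Your proposal is correct and follows essentially the same route as the paper: obtain an approximate fixed point sequence via Theorem 2.4, apply Theorem 2.4 a second time inside the compact asymptotic center, extract a convergent subsequence, and use condition (E) to force the limit to be a fixed point. The only difference is that you explicitly verify the invariance $t\bigl(A(D,\{x_n\})\bigr)\subseteq A(D,\{x_n\})$ needed to justify the second application of Theorem 2.4, a step the paper's proof leaves implicit.
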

\begin{proof}
By Theorem 2.4, there exists a sequence $\{x_{n}\}$ in $ D$ such
that $$\lim_{n\to\infty} dist (x_{n},T{x_{n}})=0.$$  Since $
A(D,\{x_n\})$ is  nonempty convex and compact, by invoking Theorem
2.4 again, there exists a sequence $\{z_{n}\}$ such that
$\lim_{n\to\infty} dist (z_{n},T{z_{n}})=0.$ By passing to
subsequence we can assume that $\lim_{n\to\infty}z_{n}=z.$ By
condition $E$, there exists $\mu>1$ such that $$\|z_{n}-Tz\|\leq
\mu\|z_{n}-Tz_{n}\|+\|z_{n}-z\|.$$  Taking limit in the above
inequality we obtain $Tz=z.$
\end{proof}
\begin{definition}
Let $D$ be a nonempty closed convex bounded subset of a Banach space
$X$, and let $t:D \to X$ and $T:D\to CB(X)$ be two mappings. Then
$t$ and $T$ are said to be commuting if for every $x,y\in D$ such
that $x\in Ty$ and $ty\in D$, we have $tx\in Tty$.
\end{definition}
We now state and prove the main result of this paper.
\begin{theorem}
Let $D$ be a nonempty closed convex bounded subset of a strictly
convex Banach space $X$, $t:D \to D$ be a single valued mapping, and
$T:D\to KC(D)$ be a multivalued mapping. Assume that both mappings
satisfy the conditions (E) and $(C_{\lambda})$ for some
$\lambda\in(0,1)$, and that $t,\, T$ commute. If the asymptotic
center relative $Fix(t)$ of each sequence in $Fix(t)$ is nonempty
and compact, then there exists a point $z\in D$ such that $z=t(z)\in
T(z).$\end{theorem}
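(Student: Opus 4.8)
The plan is to reduce the problem to a fixed-point statement for a suitable multivalued self-map of the fixed point set $F:=\mathrm{Fix}(t)$, and then to run the asymptotic-center argument of Theorem 3.3 in the multivalued setting, with all asymptotic centers taken relative to $F$, which is exactly where the hypothesis is imposed. First I would record the basic structure of $F$. Since $t$ satisfies (E) and $(C_\lambda)$, Lemma 2.5 shows that $F$ is closed and, because $X$ is strictly convex and $D$ is convex, also convex; being a subset of $D$ it is bounded. Nonemptiness of $F$ follows from the fixed-point conclusion of Theorem 3.3 applied to $t$ (its asymptotic-center requirement being part of the present standing assumptions). Thus $F$ is a nonempty closed convex bounded set on which the hypothesis guarantees that the asymptotic center of every sequence is nonempty and compact.

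Next I would exploit commutativity. Fix $y\in F$, so that $ty=y\in D$. For any $u\in Ty$ the commuting condition of Definition 3.4 gives $tu\in Tty=Ty$, hence $t(Ty)\subseteq Ty$. Now $Ty\in KC(D)$ is compact and convex and $t|_{Ty}$ still satisfies (E) and $(C_\lambda)$; since every asymptotic center taken in the compact set $Ty$ is automatically nonempty and compact, Theorem 3.3 produces a point $p\in Ty$ with $tp=p$. Therefore $Ty\cap F\neq\emptyset$ for every $y\in F$, and since $Ty$ is compact while $F$ is closed and convex, the set $\mathcal T y:=Ty\cap F$ belongs to $KC(F)$. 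A fixed point $z$ of the self-map $\mathcal T:F\to KC(F)$ is precisely a point with $z=tz\in Tz$, so it remains to find such a $z$.

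Finally I would construct an approximate fixed point sequence $\{x_n\}\subset F$ for $\mathcal T$ and pass to a limit through the compact asymptotic center relative to $F$. Following the construction behind Theorem 2.4 together with Lemma 2.11, one sets $x_{n+1}=r\,y_n+(1-r)x_n$ with $r\in[\lambda,1)$ and $y_n$ the nearest point of $\mathcal T x_n$ to $x_n$ (a single point, by strict convexity and convexity of $\mathcal T x_n$), aiming at $\mathrm{dist}(x_n,\mathcal T x_n)\to 0$. Letting $A:=A(F,\{x_n\})$ be the nonempty compact convex asymptotic center, I would use condition (E) to obtain $\limsup_n \mathrm{dist}(x_n,Tx)\le r(F,\{x_n\})$ for $x\in A$, and then extract, via compactness of the values and of $A$, a point $z\in A$ with $z\in\mathcal T z$, exactly as in the limit step of Theorem 3.3.

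The main obstacle is this third step, and specifically the passage from the conditions (E) and $(C_\lambda)$ enjoyed by $T$ on $D$ to corresponding control for the restricted map $\mathcal T=T(\cdot)\cap F$. Intersecting the compact convex values $Ty$ with $F$ need not preserve Hausdorff-distance inequalities, so neither $(C_\lambda)$, needed to drive $\mathrm{dist}(x_n,\mathcal T x_n)\to 0$ through Lemma 2.11, nor (E), needed to pass to the limit, transfers automatically. This is exactly where strict convexity must do the work: it makes the nearest-point selections into the sets $Ty\cap F$ single-valued and well behaved, and, combined with the compactness of the asymptotic center relative to $F$, it is what should allow the limit argument to close. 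I expect the careful verification that $\mathcal T$ admits an approximate fixed point sequence inside $F$ to be the technical heart of the proof.
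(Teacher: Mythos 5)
Your skeleton matches the paper's: you establish that $F=\mathrm{Fix}(t)$ is nonempty, closed, convex and bounded, and you prove $Ty\cap F\neq\emptyset$ for $y\in F$ exactly as the paper does (invariance of $Ty$ under $t$ via commutativity, then Theorem 3.3 on the compact convex set $Ty$). But the step you yourself flag as the ``technical heart'' is a genuine gap, and the way you set it up would not close. You propose to iterate the intersected map $\mathcal{T}x=Tx\cap F$, taking $y_n$ nearest to $x_n$ in $\mathcal{T}x_n$; as you correctly observe, neither $(C_\lambda)$ nor $(E)$ transfers from $T$ to $\mathcal{T}$, so Lemma 2.11 has no hypothesis to feed on, and hoping that ``strict convexity will do the work'' on the intersected map is not an argument. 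The paper's device is different and is the missing idea: it never intersects. It takes $y_n\in T(x_n)$ to be the unique nearest point to $y_{n-1}$ (not to $x_n$) in the full compact convex value $T(x_n)$, and then proves \emph{a posteriori} that $y_n\in F$: since $y_{n-1}\in F$ one has $\lambda\|y_{n-1}-ty_{n-1}\|=0\le\|y_{n-1}-y_n\|$, so $(C_\lambda)$ for $t$ gives $\|y_{n-1}-ty_n\|\le\|y_{n-1}-y_n\|=\mathrm{dist}(y_{n-1},T(x_n))$; commutativity puts $ty_n\in T(tx_n)=T(x_n)$, and uniqueness of the nearest point in a convex subset of a strictly convex space forces $ty_n=y_n$. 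Anchoring the selection at $y_{n-1}$ is also what produces the inequality Lemma 2.11 actually needs, namely $\|y_{n-1}-y_n\|=\mathrm{dist}(y_{n-1},T(x_n))\le H(T(x_{n-1}),T(x_n))\le\|x_{n-1}-x_n\|$, using $(C_\lambda)$ for $T$ on the \emph{full} values; your choice of nearest point to $x_n$ does not yield this chain.

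There is a second gap at the end. From condition $(E)$ you get, for $x\in A:=A(F,\{x_n\})$, that $Tx$ meets $A$; but that only says $Tx\cap A\neq\emptyset$ for every $x\in A$, which is not yet a fixed point of $T$. The paper must run the entire approximate-fixed-point construction a second time, now inside the compact convex set $A$, producing $z_n\in A$ with $\mathrm{dist}(z_n,Tz_n)\to 0$; compactness of $A$ gives $z_n\to z\in A$, and then the nearest-point/commutativity trick is applied once more to the points $w_n\in Tz$ nearest to $z_n$, together with $(E)$, to conclude $z=tz\in Tz$. Your phrase ``extract, via compactness, a point $z\in A$ with $z\in\mathcal{T}z$, exactly as in the limit step of Theorem 3.3'' skips this whole second iteration, which is where the compactness hypothesis on the asymptotic center relative to $\mathrm{Fix}(t)$ is actually consumed.
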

\begin{proof}
By Theorem 3.3 the mapping $t$ has a nonempty fixed  point set
$Fix(t)$ which is a closed convex subset of $X$ (by Lemma 2.5). We
show that for $x\in Fix(t)$, $Tx \cap Fix(t)\neq\emptyset$. To see
this, let $x\in Fix(t)$, since $t$ and $T$ commute, we have $ty\in
Tx$ for each $y\in Tx$. Therefore $Tx$ is invariant under $t$ for
each $x\in Fix(t)$. Since $Tx$ is a nonempty compact convex subset
of a Banach space $X$, the asymptotic center in $Tx$ of each
sequence is nonempty and compact. Therefore by Theorem 3.3 we
conclude that $t$ has a fixed point in $Tx$ and therefore $Tx \cap
Fix(t)\neq\emptyset$ for $x\in Fix(t).$\par Now we find an
approximate fixed point sequence for $T$ in $Fix(t)$. Take $x_0\in
Fix(t)$, since $Tx_{0} \cap Fix(t)\neq\emptyset$, we can choose
$y_{0}\in Tx_{0} \cap Fix(t)$. Define
$$x_1=(1- \lambda) x_0+\lambda y_0.$$ Since $Fix(t)$ is a convex set, we have
$x_{1}\in Fix(t)$. Let $y_1\in T(x_1)$ be chosen in such a way that
$$\| y_0-y_1\|=dist(y_0, T(x_1)).$$ We see that $y_{1}\in Fix(t).$
Indeed, we have $$\lambda\| y_{0}-ty_{0}\|=0\leq \|y_{0}-y_{1}\|.$$
Since $t$ satisfies the condition $(C)$, we get
$$\|y_{0}-ty_{1}\|=\|ty_{0}-ty_{1}\|\leq\| y_{0}-y_{1}\|$$ which
contradicts the uniqueness of $y_{1}$ as the unique nearest point of
$y_{0}$ (note that $ty_{1} \in Tx_{1}$). Similarly, put
$$x_2=(1- \lambda) x_1+\lambda y_1,$$ again we choose $y_2\in T(x_2) $ in such a way
that
$$\|y_1-y_2\|=dist(y_1, T(x_2)).$$ By the same argument,
we get $y_{2}\in Fix(t).$ In this way we will find a sequence
$\{x_{n}\}$ in $Fix(t)$ such that $$x_{n+1}= (1-\lambda) x_n+\lambda
y_n.$$ where $y_n\in T(x_n) \cap Fix(t)$ and
$$\| y_{n-1}-y_n\| =dist (y_{n-1}, T(x_n)).$$
Therefore for every natural number $n\ge 1$ we have
$$\lambda\|x_n-y_n\|=\|x_n-x_{n+1}\|$$ from which it follows that
$$\lambda\,
dist(x_n,T(x_n))\le \lambda\|x_n-y_n\|=\|x_n-x_{n+1}\|,\quad n\ge
1.$$ Our assumption now gives
$$H(T(x_n), T(x_{n+1}))\le \|x_n-x_{n+1}\|,\quad n\ge 1,$$
hence for each $n\ge 1$ we have
\begin{align*}\|y_n-y_{n+1}\|=dist(y_n,T(x_{n+1}))&\le H(T(x_n),
T(x_{n+1}))\\ &\le \|x_n-x_{n+1}\|.\end{align*} We now apply Lemma
2.11 to conclude that $\lim_{n\to \infty}\|x_n-y_n\|=0$ where
$y_n\in T(x_n)$. From Lemma 3.2 by passing to a subsequence, we may
assume $\{x_n\}$ is regular asymptotically uniform relative to
$Fix(t)$. Let $r=r(Fix(t),\{x_n\})$. Now, we show that $ Tx\cap
A(Fix (t),\{x_n\})\neq\emptyset$ for $x\in A(Fix(t),\{x_n\})$. The
compactness of $T(x_n)$ implies that for each $n$
 we can take $y_n\in T(x_n)$ such that
$$\|x_n-y_n\|=dist(x_n, T(x_n)).$$ Suppose $x\in
A(Fix(t),\{x_n\})$. Since $T(x)$ is compact, for each $n$, we choose
$z_n\in T(x)$ such that $$\|z_n-y_n\|=dist(y_n, T(x)).$$ By
assumption there exist $\mu>1$ such that \begin{align*}\|y_n-z_n\|=
dist (y_{n},Tx) &\le H(T(x_n), T(x))\\ &\le \mu\, dist(x_n,
T(x_n))+\|x_n-x\|.\end{align*}
 Since $T(x)$ is compact, the sequence $\{z_n\}$ has a convergent
subsequence $\{z_{n_k}\}$ with $\lim_{k\to \infty}z_{n_k}=z\in
T(x)$. Note that$$\|x_{n_k}-z\|\le
\|x_{n_k}-y_{n_k}\|+\|y_{n_k}-z_{n_k}\|+\|z_{n_k}-z\|$$ This entails
$$\limsup _{k\to\infty}\| x_{n_k}-z\|\le \limsup_{k\to
\infty}\|x_{n_k}-x\|\le r.$$ Since $\{x_n\}$ is regular
asymptotically uniform relative to $Fix(t)$, this shows that
 $z\in A(Fix(t),\{x_{n_k}\})=A(Fix(t),\{x_n\})$
  therefore $$z\in Tx\cap
A(Fix(t),\{x_n\}).$$ Now, we show that there exists a sequence
$\{z_{n}\}\subset A(Fix(t),\{x_n\})$ such that  $\lim
dist_{n\to\infty}(z_n,Tz_n)=0$. Indeed, take $z_0\in
A(Fix(t),\{x_n\})$, since $Tz_0\cap A(Fix(t),\{x_n\})\neq
\emptyset$, there exists $y_0\in Tz_0\cap A(Fix(t),\{x_n\}).$ We
define
$$z_1=(1- \lambda) z_0+\lambda y_0.$$
Since $A(Fix(t),\{x_n\})$ is convex, we have $z_1\in A(Fix(t),\{x_n\}).$\\
Similarly, since $Tz_1\cap A(Fix(t),\{x_n\})\neq \emptyset$, by the
compactness of $Tz_1\cap A(Fix(t) ,\{x_n\})$ we can choose $y_1\in
Tz_1\cap A(Fix(t),\{x_n\})$ in such a way that
$$\|y_0-y_1\|=dist(y_0, Tz_1\cap A(Fix(t),\{x_n\})).$$
 In this way we will find a sequence $\{z_n\}\in A(Fix(t),\{x_n\})$
such that
$$z_{n+1}=(1- \lambda) z_n+\lambda y_n$$ where $y_n\in Tz_n\cap
A(Fix(t),\{x_n\})$ and $$ \|y_{n-1}-y_n\|=dist(y_{n-1}, Tz_n\cap
A(Fix(t),\{x_n\})).$$ Therefore for every natural number $n\ge 1$ we
have
$$\lambda\|z_n-y_n\|=\|z_n-z_{n+1}\|$$ from which it follows that
$$\lambda\,
dist(z_n,T(z_n))\le \lambda\|z_n-y_n\|=\|z_n-z_{n+1}\|,\quad n\ge
1.$$ Our assumption now gives
$$H(T(z_n), T(z_{n+1}))\le \|z_n-z_{n+1}\|,\quad n\ge 1,$$
hence
$$\|y_n-y_{n+1}\|=dist(y_n,T(z_{n+1}))\le H(T(z_n), T(z_{n+1}))\le
\|z_n-z_{n+1}\|,\quad n\ge 1.$$ We now apply Lemma 2.11 to conclude
that $\lim_{n\to \infty}\|z_n-y_n\|=0$ where $y_n\in T(z_n)$.

Since $z_n\in  A(Fix(t),\{x_n\})$, and $A(Fix(t),\{x_n\})$ is
compact, by passing to subsequence we may assume that $z_n$ is
convergent to $ z\in A(Fix(t),\{x_n\})$  as $n\to\infty$. Since $Tz$
is compact, for each $n\ge 1$, we can choose $ w_{n}\in Tz$ such
that $\|w_n-z_n\|=dist(z_n, Tz).$ Moreover $w_{n}\in Fix(t)$ for all
natural numbers $n\geq1.$ Indeed, since
$$ \lambda\|z_{n}-tz_{n}\|=0\leq \|w_{n}-z_{n}\|,\qquad n\geq1,$$ we have
$$\|z_{n}-tw_{n}\|=\|tw_{n}-tz_{n}\|\leq\|w_{n}-z_{n}\|.$$ Since
$z\in Fix(t)$ and $w_{n}\in T(z)$, by the fact that the mappings $t$
and $T$ commute, we obtain $tw_{n}\in Ttz=Tz$. Now, by the
uniqueness of $w_{n}$ as the nearest point to $z_{n}$, we get
$tw_{n} =w_{n}\in Fix(t).$ \\Since $ Tz$ is compact the sequence
$\{w_n\}$ has a convergent subsequence $\{w_{n_k}\}$ with
$\lim_{k\to \infty}w_{n_k}=w\in Tz $. Because $w_{n_{k}}\in Fix(t)$
for all n, and $Fix(t)$ is closed, we obtain $w\in Fix(t)$. By
assumption there exists $\mu>1$  such that
$$dist(z_{n_{k}}, Tz) \leq \mu
dist(z_{n_{k}},T(z_{n_{K}}))+\|z_{n_{k}}-z\|.$$ Note
that\begin{multline*}\|z_{n_k}-w\|\le
\|z_{n_k}-w_{n_k}\|+\|w_{n_k}-w\|\\\leq \mu
dist(z_{n_{k}},T(z_{n_{k}}))+\|w_{n_{k}}-w\|
+\|z_{n_k}-z\|.\end{multline*}
 This entails $$\limsup _{k\to\infty}\| z_{n_k}-w\|\le \limsup_{k\to
\infty}\|z_{n_k}-z\|.$$ We conclude that $z=w$, hence $z=tz\in T z.$

\end{proof}
\begin{theorem}
Let $D$ be a nonempty closed convex bounded subset of a uniformly
convex Banach space $X$, $t:D \to D$, and $T:D\to KC(D)$ a single
valued and a multivalued  mapping, both satisfying the conditions
(E) and $(C_{\lambda})$ for some $\lambda\in(0,1)$. Assume that
$t,\, T$ commute. Then there exists a point $z\in D$ such that
$z=t(z)\in T(z)$ .\end{theorem}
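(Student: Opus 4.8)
The plan is to derive this theorem as an immediate consequence of Theorem 3.5, by observing that uniform convexity supplies, for free, the only hypothesis of Theorem 3.5 that is not already assumed here: the nonemptiness and compactness of the asymptotic centers relative to $Fix(t)$. So the whole argument reduces to checking that the relevant asymptotic-center sets are singletons.

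First I would invoke the fact recorded in the Preliminaries that in a uniformly convex Banach space the asymptotic center of a bounded sequence with respect to a bounded closed convex subset is a singleton. Applying this to the bounded closed convex set $D$ itself, the asymptotic center relative to $D$ of every sequence in $D$ is a singleton, hence nonempty and compact. This is exactly the hypothesis of Theorem 3.3, so that theorem applies and yields a fixed point of $t$; in particular $Fix(t)\neq\emptyset$. Since a uniformly convex space is strictly convex and $D$ is convex, Lemma 2.5 then guarantees that $Fix(t)$ is a closed and convex subset of $X$, and being contained in the bounded set $D$ it is also bounded.

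Next I would transfer the singleton property from $D$ to this smaller set. Because $Fix(t)$ is itself a bounded closed convex subset of the uniformly convex space $X$, the same fact from the Preliminaries shows that the asymptotic center relative to $Fix(t)$ of every bounded sequence in $Fix(t)$ is a singleton, and therefore nonempty and compact. At this point every hypothesis of Theorem 3.5 is in force: $D$ is a bounded closed convex subset of a strictly convex (indeed uniformly convex) Banach space, both $t$ and $T$ satisfy conditions (E) and $(C_\lambda)$ and commute, and the asymptotic center relative to $Fix(t)$ of each sequence in $Fix(t)$ is nonempty and compact. Applying Theorem 3.5 produces a point $z\in D$ with $z=t(z)\in T(z)$, which is the desired conclusion.

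There is no genuine obstacle here, since the statement is a specialization of Theorem 3.5; the one point deserving care is precisely the step in which the singleton-center property of the ambient uniformly convex space is transferred from $D$ to the subset $Fix(t)$. That transfer is legitimate only because $Fix(t)$ is again bounded, closed, and convex, and I would make sure to record explicitly that Theorem 3.3 (for nonemptiness of $Fix(t)$) together with Lemma 2.5 (for closedness and convexity) deliver exactly these three properties before appealing to the uniform-convexity characterization of asymptotic centers.
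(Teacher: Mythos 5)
Your proposal is correct and follows essentially the same route as the paper's own proof: both deduce the result from Theorem 3.5 by using Theorem 3.3 and Lemma 2.5 to show $Fix(t)$ is nonempty, closed, convex and bounded, and then invoking the singleton property of asymptotic centers in uniformly convex spaces to verify the compactness hypothesis of Theorem 3.5. Your version is merely more explicit than the paper's in checking that the hypothesis of Theorem 3.3 holds for $D$ and that the singleton-center fact transfers to the subset $Fix(t)$.
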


\begin{proof}
By Theorem 3.3 the mapping $t$ has a nonempty fixed point set
$Fix(t)$ which is a closed convex subset of $X$ (by Lemma 2.5).
Since $X$ is uniformly convex, we have asymptotic center relative
$Fix(t)$ of each sequence in $Fix(t)$ is nonempty and singleton
(hence compact). Therefore, by theorem 3.5, $T$ has a fixed point.
\end{proof}
\begin{theorem}
Let $D$ be a nonempty weakly compact convex subset of  a UCED Banach
space $X$. Let $t:D \to D$, and $T:D\to KC(D)$ be a single valued
and a multivalued  mapping respectively, both satisfying the
conditions (E) and $(C_{\lambda})$ for some $\lambda\in(0,1)$.
Assume that $t,\, T$ commute. Then there exists a point $z\in D$
such that $z=t(z)\in T(z)$ .\end{theorem}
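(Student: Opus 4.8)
The plan is to reduce Theorem 3.7 to Theorem 3.5 by verifying that, in the present UCED setting, all the hypotheses of Theorem 3.5 are satisfied; the only genuine work lies in extracting the required asymptotic-center property from uniform convexity in every direction together with the weak compactness of $D$.

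First I would record the structural consequences of the hypotheses. Since $X$ is UCED it is in particular strictly convex, so the strict-convexity requirement of Theorem 3.5 holds automatically. A weakly compact set is norm bounded and weakly closed, hence norm closed, so $D$ is a nonempty closed convex bounded subset of $X$, exactly as Theorem 3.5 demands. The mappings $t$ and $T$ map into the prescribed spaces, satisfy the conditions (E) and $(C_\lambda)$, and commute by assumption, so these requirements need no further comment.

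Next I would produce $Fix(t)$ and identify its topological nature. Applying the fact recorded just before Definition 3.1 — that in a UCED space the asymptotic center of a sequence relative to a weakly compact convex set is a singleton — to the weakly compact convex set $D$, we see that the asymptotic center relative $D$ of every sequence in $D$ is a singleton, in particular nonempty and compact. Theorem 3.3 then gives $Fix(t)\neq\emptyset$, and Lemma 2.5 shows that $Fix(t)$ is a closed convex subset of $X$.

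The key step, and the one I expect to be the main obstacle, is to verify the remaining hypothesis of Theorem 3.5, namely that the asymptotic center relative $Fix(t)$ of each sequence in $Fix(t)$ is nonempty and compact. For this I would first argue that $Fix(t)$ is itself weakly compact and convex: being a norm-closed convex set, $Fix(t)$ is weakly closed, and a weakly closed subset of the weakly compact set $D$ is again weakly compact. Once $Fix(t)$ is known to be weakly compact and convex, the same UCED property — now applied to $Fix(t)$ in place of $D$ — shows that the asymptotic center relative $Fix(t)$ of every sequence in $Fix(t)$ is a singleton, hence nonempty and compact. With all hypotheses of Theorem 3.5 now in place, that theorem produces a point $z\in D$ with $z=t(z)\in T(z)$, which completes the proof.
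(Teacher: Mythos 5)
Your proposal is correct and follows essentially the same route as the paper: establish $Fix(t)\neq\emptyset$ via Theorem 3.3 and Lemma 2.5, observe that $Fix(t)$ is weakly compact (being norm-closed and convex inside the weakly compact set $D$), invoke the UCED fact that asymptotic centers relative to weakly compact convex sets are singletons, and conclude by Theorem 3.5. You merely supply slightly more detail than the paper at each of these steps.
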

\begin{proof}
By Theorem 3.3 the mapping $t$ has a nonempty fixed point set
$Fix(t)$ which is a closed convex subset of $X$ (by Lemma 2.5).
Since $D$ is weakly compact we conclude that $Fix(t)$ is weakly
compact. Since $X$ is UCED, we conclude that the asymptotic center
relative $Fix(t)$ of each sequence in $Fix(t)$ is nonempty and
singleton (hence compact). Therefore, by Theorem 3.5, $T$ has a
fixed point.\end{proof}

\begin{theorem}
Let $D$ be a nonempty compact convex subset of a strictly convex
Banach space $X$, $t:D \to D$, and $T:D\to KC(D)$ a single valued
and a multivalued  mapping, both satisfying the conditions (E) and
$(C_{\lambda})$ for some  $\lambda\in(0,1)$. Assume that $t,\, T$
commute. Then there exists a point $z\in D$ such that $z=t(z)\in
T(z).$\end{theorem}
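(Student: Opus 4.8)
The plan is to realize this statement as a direct corollary of Theorem 3.5, the only work being to check that the compactness of $D$ supplies the two ingredients that Theorem 3.5 requires: that $t$ has a fixed point, and that the asymptotic center relative to $Fix(t)$ of every sequence in $Fix(t)$ is nonempty and compact. The crucial observation, which I would isolate first, is that on \emph{any} compact subset $K$ of $X$ the asymptotic center of an arbitrary bounded sequence $\{x_n\}$ is automatically nonempty and compact.

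To verify this observation, I would note that the functional $\varphi(x)=\limsup_{n\to\infty}\|x_n-x\|$ is Lipschitz continuous on $X$, since $|\varphi(x)-\varphi(y)|\le\|x-y\|$ follows from the triangle inequality after passing to the limit superior. Restricted to the compact set $K$, the continuous function $\varphi$ attains its infimum $r(K,\{x_n\})$, so $A(K,\{x_n\})=\{x\in K:\varphi(x)=r(K,\{x_n\})\}$ is nonempty. Being the intersection of $K$ with the preimage of a single point under a continuous map, this set is closed in $K$, hence compact.

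Applying the observation with $K=D$ shows that the asymptotic center relative to $D$ of each sequence in $D$ is nonempty and compact, so Theorem 3.3 applies and $t$ has a nonempty fixed point set $Fix(t)$. By Lemma 2.5, $Fix(t)$ is closed, and since $X$ is strictly convex and $D$ is convex it is also convex; being a closed subset of the compact set $D$, it is itself compact. Applying the observation a second time with $K=Fix(t)$ then gives that the asymptotic center relative to $Fix(t)$ of each sequence in $Fix(t)$ is nonempty and compact. All the hypotheses of Theorem 3.5 now hold — a compact convex set is in particular closed, convex, and bounded — and that theorem delivers a point $z\in D$ with $z=t(z)\in T(z)$.

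The argument is essentially routine once the continuity of $\varphi$ is recorded; the one point deserving care is the reduction step itself, namely recognizing that norm-compactness of $D$ is precisely the feature that forces the asymptotic centers to be compact in the norm topology, so that Theorem 3.5 — which is phrased in terms of norm-compact asymptotic centers rather than weak compactness as in Theorem 3.7 — is directly applicable with no additional uniform-convexity assumption.
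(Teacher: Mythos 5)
Your proof is correct and follows essentially the same route as the paper: invoke Theorem 3.3 to get $Fix(t)\neq\emptyset$, use Lemma 2.5 and the compactness of $D$ to see that $Fix(t)$ is compact convex, deduce that asymptotic centers relative to $Fix(t)$ are nonempty and compact, and conclude via Theorem 3.5. Your explicit verification that asymptotic centers relative to any compact set are nonempty and compact (via continuity of $x\mapsto\limsup_n\|x_n-x\|$) is a detail the paper leaves implicit, and it correctly shows that strict convexity plays no role in that particular step.
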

\begin{proof}
By Theorem 3.3 the mapping $t$ has a nonempty fixed point set
$Fix(t)$ which is a closed convex subset of $X$ (by Lemma 2.5).
Since $D$ is compact we conclude that $Fix(t)$ is  compact. Since
$X$ is strictly convex, we infer that the asymptotic center relative
$Fix(t)$ of each sequence in $Fix(t)$ is nonempty and compact.
Therefore, by Theorem 3.5, $T$ has a fixed point.\end{proof}
\begin{theorem}
Let $D$ be a nonempty closed convex bounded subset of a strictly
convex Banach space $X$. Let $t:D \to D$, and $T:D\to KC(D)$ be two
nonexpansive mappings. Assume that $t,\, T$ commute.  Suppose the
asymptotic center relative $Fix(t)$ of each sequence in $Fix(t)$ is
nonempty and compact. Then there exists a point $z\in D$ such that
$z=t(z)\in T(z).$\end{theorem}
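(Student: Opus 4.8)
The plan is to derive this statement as an immediate corollary of Theorem 3.5. To do so, I would only need to verify that the hypothesis of Theorem 3.5 --- that $t$ and $T$ both satisfy the conditions (E) and $(C_\lambda)$ for some common $\lambda\in(0,1)$ --- is automatically implied by nonexpansiveness. All the remaining hypotheses (strict convexity of $X$, commutativity of $t$ and $T$, and the nonemptiness and compactness of the asymptotic center relative to $Fix(t)$) are assumed verbatim, so once the two conditions are checked, the conclusion $z=t(z)\in T(z)$ is exactly the conclusion of Theorem 3.5.

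First I would treat the single-valued map $t$. Condition $(C_\lambda)$ holds for every $\lambda\in(0,1)$ trivially: its conclusion $\|tx-ty\|\le\|x-y\|$ is precisely the defining inequality of nonexpansiveness and therefore holds for all $x,y\in D$ irrespective of the premise $\lambda\|x-tx\|\le\|x-y\|$. For condition (E), the triangle inequality combined with nonexpansiveness yields
$$\|x-ty\|\le\|x-tx\|+\|tx-ty\|\le\|x-tx\|+\|x-y\|,$$
so $t$ satisfies $(E_\mu)$ with $\mu=1$.

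Next I would treat the multivalued map $T$. Since $T\colon D\to KC(D)$ is nonexpansive, $H(Tx,Ty)\le\|x-y\|$ holds for all $x,y\in D$, and hence the implication defining $(C_\lambda)$ is satisfied trivially for every $\lambda\in(0,1)$, exactly as in the single-valued case. For condition (E) I would invoke Lemma 2.10, which asserts that a multivalued nonexpansive mapping satisfies $(E_1)$. Fixing any $\lambda\in(0,1)$ (for instance $\lambda=1/2$), both $t$ and $T$ now satisfy (E) and $(C_\lambda)$ for this common $\lambda$, and an application of Theorem 3.5 completes the argument.

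I do not expect any genuine obstacle: the mathematical substance is entirely contained in Theorem 3.5, and the present theorem is merely its specialization to the classical nonexpansive setting. The only point deserving a moment's attention is the claim that $(C_\lambda)$ holds trivially, which simply reflects the fact that an implication whose conclusion is valid for all $x,y$ is automatically true; in other words, nonexpansiveness is a strictly stronger property than $(C_\lambda)$.
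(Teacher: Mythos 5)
Your proposal is correct and follows essentially the same route as the paper: the paper's own proof also reduces the statement to Theorem 3.5 by invoking Lemma 2.10 for condition $(E_1)$ and observing that nonexpansiveness trivially implies $(C_\lambda)$ for every $\lambda\in(0,1)$. You merely spell out the (equally easy) verification for the single-valued map $t$, which the paper leaves implicit.
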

\begin{proof} By Lemma 2.10 the mapping $T$
satisfies the condition $E_{1}$. we also note that $T$ satisfies the
condition $(C_{\lambda})$ for all $\lambda\in(0,1).$ So the result
follows from Theorem 3.5\end{proof}
\begin{corollary}
Let $D$ be a nonempty closed convex bounded subset of a uniformly
convex Banach space $X$. Let $t:D \to D$, and $T:D\to KC(D)$ be two
nonexpansive mappings. Assume that $t,\, T$ commute. Then there
exists a point $z\in D$ such that $z=t(z)\in T(z)$ .\end{corollary}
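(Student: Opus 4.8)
The plan is to reduce the corollary to Theorem 3.10, in exactly the way Theorem 3.6 was derived from Theorem 3.5. Since $X$ is uniformly convex it is in particular strictly convex, so the ambient-space hypothesis of Theorem 3.10 is satisfied, and by assumption $t$ and $T$ are nonexpansive and commute. Consequently the only hypothesis of Theorem 3.10 still to be checked is that the asymptotic center relative to $Fix(t)$ of each sequence in $Fix(t)$ is nonempty and compact; everything else is handed to us directly.

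First I would record that $Fix(t)$ is a bounded, closed, convex subset of $X$. Boundedness is inherited from $D$, while closedness and convexity follow from Lemma 2.5, provided we know that the single-valued nonexpansive $t$ satisfies conditions (E) and $(C_{\lambda})$. This is immediate: the inequality $\|tx-ty\|\le\|x-y\|$ forces the conclusion of $(C_{\lambda})$ for \emph{every} $\lambda\in(0,1)$, and the triangle inequality $\|x-ty\|\le\|x-tx\|+\|tx-ty\|\le\|x-tx\|+\|x-y\|$ gives $(E_{1})$. Nonemptiness of $Fix(t)$ is guaranteed by Theorem 3.3, whose own asymptotic-center assumption is met here (see the next step).

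The key step is then to invoke the standard fact recalled at the start of Section 3, namely that in a uniformly convex Banach space the asymptotic center of a bounded sequence with respect to a bounded closed convex set is a singleton. Applying this to the bounded closed convex set $Fix(t)$ shows that $A(Fix(t),\seq)$ is a singleton, hence nonempty and compact, for every bounded sequence $\seq$ in $Fix(t)$. This verifies the outstanding hypothesis of Theorem 3.10 (and, via Theorem 3.3, also guarantees $Fix(t)\neq\emptyset$).

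With all hypotheses of Theorem 3.10 now in force, that theorem yields a point $z\in D$ with $z=t(z)\in T(z)$, which is precisely the assertion of the corollary. There is no genuine obstacle in this argument: the analytic work was already done in Theorem 3.5 and its nonexpansive specialization Theorem 3.10, and passing from strict to uniform convexity simply supplies the compactness of the asymptotic center for free. The only point deserving a moment's care is confirming that nonexpansive mappings lie within the scope of Theorem 3.10, i.e. that they satisfy conditions (E) and $(C_{\lambda})$; for the multivalued $T$ this is Lemma 2.10 together with the trivial verification of $(C_{\lambda})$, and for the single-valued $t$ it is the elementary computation indicated above.
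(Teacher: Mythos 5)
Your proposal is correct and follows exactly the route the paper intends: the corollary is Theorem 3.10 combined with the observation (already used in the proof of Theorem 3.6) that in a uniformly convex space the asymptotic center of a bounded sequence relative to the bounded closed convex set $Fix(t)$ is a singleton, hence nonempty and compact. Your verifications that nonexpansive mappings satisfy $(E_1)$ and $(C_\lambda)$ are the same ones the paper uses (Lemma 2.10 for $T$, the triangle inequality for $t$), so nothing further is needed.
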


\begin{corollary}
Let $E$ be a nonempty weakly compact convex subset of  a UCED Banach
space $X$. Let $t:D \to D$, and $T:D\to KC(D)$ be two nonexpansive
mappings. Assume that $t,\, T$ commute. Then there exists a point
$z\in D$ such that $z=t(z)\in T(z)$ .\end{corollary}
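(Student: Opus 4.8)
\noindent The plan is to reduce this corollary to Theorem 3.7, which already establishes the existence of a common fixed point for a commuting pair satisfying the generalized conditions (E) and $(C_\lambda)$ on a weakly compact convex subset of a UCED Banach space. Thus the entire task is to check that ordinary nonexpansiveness of $t$ and $T$ forces both of these conditions; once that is done the conclusion $z=t(z)\in T(z)$ is immediate. This is precisely the mechanism used in the proof of Theorem 3.9 for the strictly convex setting, and in Corollary 3.10 for the uniformly convex one.

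First I would verify the two conditions for the single-valued mapping $t$. From $\|tx-ty\|\le\|x-y\|$ one obtains condition $(E_1)$ by a single triangle inequality, since $\|x-ty\|\le\|x-tx\|+\|tx-ty\|\le\|x-tx\|+\|x-y\|$ for all $x,y\in D$. The same hypothesis gives condition $(C_\lambda)$ (Definition 2.3) for every $\lambda\in(0,1)$: the conclusion $\|tx-ty\|\le\|x-y\|$ of the defining implication holds unconditionally, so its premise $\lambda\|x-tx\|\le\|x-y\|$ never needs to be invoked.

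Next I would verify the conditions for the multivalued mapping $T$. Here Lemma 2.10 delivers condition $(E_1)$ at once. For condition $(C_\lambda)$ (Definition 2.9) the same vacuous-premise argument applies: because $H(Tx,Ty)\le\|x-y\|$ holds for all $x,y\in D$ by nonexpansiveness, the required implication is satisfied for every $\lambda\in(0,1)$, regardless of whether its hypothesis is met.

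With $t$ and $T$ both shown to satisfy (E) and $(C_\lambda)$, the remaining hypotheses of the corollary --- $D$ weakly compact convex in a UCED space, and $t,T$ commuting --- coincide verbatim with those of Theorem 3.7. I would therefore invoke Theorem 3.7 to produce the desired point $z\in D$ with $z=t(z)\in T(z)$. I do not anticipate any genuine obstacle: the only content is the elementary passage from nonexpansiveness to the generalized conditions, while the interplay of weak compactness with the UCED property is exactly what Theorem 3.7 already exploits (via Theorem 3.5) to guarantee that the asymptotic center relative to $Fix(t)$ is a compact singleton.
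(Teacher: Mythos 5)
Your proposal is correct and is exactly the argument the paper intends: the corollary is stated without proof, but it is meant to follow from Theorem 3.7 by observing that nonexpansive mappings satisfy $(E_1)$ (Lemma 2.10 for $T$, the triangle inequality for $t$) and $(C_\lambda)$ for every $\lambda\in(0,1)$, precisely the reduction the paper writes out for Theorem 3.9. No discrepancy to report.
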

\begin{corollary}
Let $D$ be a nonempty compact convex subset of a strictly convex
Banach space $X$. Let $t:D \to D$, and $T:D\to KC(D)$ be two
nonexpansive mappings. Assume that $t,\, T$ commute. Then there
exists a point $z\in D$ such that $z=t(z)\in T(z).$\end{corollary}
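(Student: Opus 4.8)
The plan is to deduce this statement directly from Theorem 3.8, by checking that every nonexpansive mapping, whether single-valued or multivalued, automatically satisfies the conditions (E) and $(C_{\lambda})$. Once those hypotheses are verified, the compactness of $D$ and the strict convexity of $X$ place us squarely in the setting of Theorem 3.8, and no further work is needed.

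First I would dispose of the multivalued map $T$. By Lemma 2.10 a multivalued nonexpansive mapping satisfies condition $(E_{1})$, hence condition (E). For condition $(C_{\lambda})$, observe that nonexpansiveness gives $H(Tx,Ty)\le\|x-y\|$ for all $x,y\in D$ with no hypothesis whatsoever; in particular the conclusion of the implication $\lambda\, dist(x,Tx)\le\|x-y\|\implies H(Tx,Ty)\le\|x-y\|$ holds unconditionally, so $(C_{\lambda})$ is satisfied for every $\lambda\in(0,1)$.

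Next I would treat the single-valued map $t$. Since $t$ is nonexpansive, $\|tx-ty\|\le\|x-y\|$ holds for all $x,y\in D$, which immediately yields $(C_{\lambda})$ for every $\lambda\in(0,1)$ exactly as above. For condition (E), the triangle inequality gives $\|x-ty\|\le\|x-tx\|+\|tx-ty\|\le\|x-tx\|+\|x-y\|$, so $t$ satisfies $(E_{1})$ with $\mu=1$.

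Having verified that both $t$ and $T$ satisfy (E) and $(C_{\lambda})$ for some (indeed every) $\lambda\in(0,1)$, and since $D$ is a compact convex subset of the strictly convex space $X$ with $t,\,T$ commuting, all hypotheses of Theorem 3.8 are in force; invoking it produces a point $z\in D$ with $z=t(z)\in T(z)$, as required. I do not anticipate any serious obstacle: all the analytic content—the construction of approximate fixed-point sequences, the asymptotic-center argument, and the uniqueness-of-nearest-point step that exploits strict convexity—is already carried out in Theorem 3.5 and its specialization Theorem 3.8. The only point warranting any care is the verification of (E) for the single-valued $t$, but this is just the routine triangle-inequality estimate displayed above.
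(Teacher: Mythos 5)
Your proposal is correct and matches the paper's intended route: the corollary is meant to follow from Theorem 3.8 exactly as Theorem 3.9 follows from Theorem 3.5, by observing that nonexpansive mappings satisfy $(C_{\lambda})$ for every $\lambda\in(0,1)$ (the conclusion of the implication holds unconditionally) and satisfy $(E_{1})$ (by Lemma 2.10 in the multivalued case and by the triangle inequality in the single-valued case). Your verifications are all sound, so nothing further is needed.
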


\end{document}